\begin{document}
\title[Negative stiffness]
{Stability of solutions to damped equations with negative stiffness}

\author[J. G. Dix, C. A. Terrero-Escalante]
{Julio G. Dix, C\'esar A. Terrero-Escalante} % In alphabetical order

\address{Department of Mathematics\\
Texas State Univeristy\\
601 University Drive\\
San Marcos, TX 78666 USA}
\email[J. G. Dix]{jd01@txstate.edu}

\address{Departamento de F\'{\i}sica Te{\'o}rica\\
Instituto de F\'{\i}sica\\
Universidade do Estado do Rio de Janeiro\\
Maracan\~a, 20559-900 RJ, Brazil}
\email[C. A. Terrero-Escalante]{cterrero@dft.if.uerj.br}

\begin{abstract}
This article concerns the stability of a model for
mass-spring systems with positive damping and negative
stiffness. It is well known that when the coefficients
are frozen in time the system is unstable. Here we find
conditions on the variable coefficients to prove stability.
In particular, we disprove the believe that if the eigenvalues
of the system change slowly in time the system remains unstable.
We extend some of our results for nonlinear systems.
\end{abstract}

\subjclass[2000]{34D20, 70J25}
\keywords{Negative stiffness; mass-spring systems; stability}
\thanks{Submitted May 30, 2007.}

\maketitle
\numberwithin{equation}{section}
\newtheorem{theorem}{Theorem}[section]
\newtheorem{lemma}[theorem]{Lemma}
\newtheorem{remark}[theorem]{Remark}

\section{Introduction}

In this article, we present conditions for the
stability of solutions to the  differential equation
\begin{gather} \label{e1}
u''(t)+b(t)u'(t)+k(t)u(t)=0,\\
u(t_0)=u_0,\quad u'(t_0)=u_1\,, \label{e2}
\end{gather}
where the coefficient $k$ may have negative values. This equation
has been used for modelling mass-spring systems, where the mass is
one unit, the coefficient $b$ produces a damping effect
proportional to velocity, and the coefficient $k$ is the stiffness
coefficient. Physical examples of systems with negative stiffness
can be found in \cite{wang1,wang2}.

Equation \eqref{e1} is also written in matrix form as
\begin{equation} \label{e3}
\begin{pmatrix}u\\ u'\end{pmatrix}' = A(t) \begin{pmatrix}u\\ u'\end{pmatrix},
\quad A(t) = \begin{pmatrix}0 & 1\\ -k(t) & -b(t) \end{pmatrix}\,.
\end{equation}
Note that the roots of the auxiliary equation of \eqref{e1} and the
eigenvalues of the matrix $A$ are $\lambda_\pm =
\big(-b\pm\sqrt{b^2-4k}\big)/2$.
 For constant coefficients $b$, $k$ with $k<0$, one eigenvalue
is negative and one is positive. This makes the point $u=0$,
$u'=0$ a saddle point and the zero solution unstable (see
definition below).

The literature for this differential equations with
time-varying coefficient has several results about instability
with negative stiffness, but none about stability.
In an attempt to extend the stability results to time-varying
coefficients, the so called \textit{frozen
coefficient method} has been developed. In this technique the
coefficients are frozen in time and the system is analyzed as a
system of constant coefficients \cite[Sec. 10.7]{brogan}.
 Thus arises a belief that if the eigenvalues corresponding to
time-varying coefficients change slowly with respect to time, then
the instability obtained for constant coefficients remains valid.
However,  we did not find a precise statement of how small should
be the rate of change of the eigenvalues. In this article, we
disprove the believed instability  by showing that for each positive number,
there exist coefficients $b$ and $k$ for which \eqref{e1} is stable
and the rate of change in the eigenvalues does not exceed the
given number. See Remark \ref{no-belief}.

The main objective of this article is to  find conditions on
$b(t)$ and $k(t)$ for stability in the negative stiffness case.
More precisely, we find conditions for the transition from
instability (at the frozen state) to stability of systems with
variable coefficients. To this end we use Lyapunov functionals in
sections 2, and a fixed point argument in section 3. Also we show
that for every stiffness coefficient, there is a damping coefficient
that makes \eqref{e1} stable. Similarly, for every non-negative
damping coefficient, we find a stiffness coefficient so that
\eqref{e1} is stable if and only if an integral condition \eqref{fix1}
on $b$ is satisfied. Then as an application, we extend the
stability results to nonlinear systems. We conclude this article
by presenting some instability results that complement those in
the literature.

In this article, we assume that $b(t)$ and $k(t)$
are continuous functions so that standard
arguments in differential equations guarantee the existence
and uniqueness of a solution $u(t)=u(t,u_0,u_1)$.

\noindent\textbf{Definition.} The zero solution  is
\emph{stable} if for each $\epsilon>0$, there exists a
corresponding $\delta(\epsilon,t_0)>0$ such that $u(t_0)^2+u'(t_0)^2<\delta ^2$
implies $u(t)^2+u'(t)^2<\epsilon ^2$ for all $t\geq t_0$.
Equivalently, $\max\{|u(t_0)|,|u'(t_0)|\}<\delta$
implies $\max\{|u(t)|,|u'(t)|\}<\epsilon$.

The zero solution is \emph{asymptotically stable} if for some
$\delta>0$, the condition $u(t_0)^2+u'(t_0)^2<\delta ^2$ implies
 $\lim_{t\to\infty} u(t)^2+u'(t)^2=0$.
Equivalently, $\max\{|u(t_0)|,|u'(t_0)|\}<\delta$
implies $\lim_{t\to\infty}|u(t)|=\lim_{t\to\infty}|u'(t)|=0$.

The zero solution  is strictly  stable if it is stable
and asymptotically stable. A solution that is not stable is called
unstable. For linear systems, the stability of one solution implies
the stability of all solutions, in which case the system is called
stable.

\section{Stability using Lyapunov functionals}

Stability for \eqref{e1}, with positive stiffness, has been established when
$b(t)$ and $k(t)$ are bounded above and below by positive constants in
\cite{starz}. 
Assuming that $|b|$, $|k|$, and $|k'|$ are bounded above,
Ignatiev \cite{ignatiev} proved uniform asymptotic stability, under the assumption
that $k$ and $k'/(2k)+b$ are bounded below by two positive constants.

Stability for systems of the form
$\mathbf{x}'=A(t)\mathbf{x}$ and $\mathbf{x}'=A(t,\mathbf{x})\mathbf{x}$
has been studied by several authors \cite{desoer,hale,napoles,rosen,solo}.
However, their restrictions on the
matrix $A(t)$ do not allow for negative stiffness.
In \cite{hale}, the eigenvalues have negative real part, and the matrix
satisfies some growth conditions.
In \cite{solo} the average of the real part of the eigenvalues is negative
and the matrix satisfies some growth conditions.

Our first stability result reads as follows.

\begin{theorem} \label{lyapunov-stabl0}
The zero solution of \eqref{e1} is stable if, for all $t\geq t_0$, the
following  conditions are satisfied:
\begin{gather}
\label{H1} b(t)>0, \quad \frac1{b(t)}+k(t)\geq M \quad
\text{for a constant $M$ (which maybe negative)},\\
\label{H2} \frac{d}{dt} e^{\frac1{b}+k} \leq -
(e^{\frac1{b}+k}-k)^2/(2b) \,.
\end{gather}
\end{theorem}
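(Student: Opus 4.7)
The plan is to construct a Lyapunov functional
\[
V(t) := u'(t)^2 + h(t)\, u(t)^2, \qquad h(t) := e^{1/b(t)+k(t)},
\]
show that (H2) forces $V$ to be non-increasing along any solution of \eqref{e1}, and use (H1) to make $V$ two-sided comparable with $u^2+(u')^2$ so that the $\epsilon$--$\delta$ definition of stability can be read off.

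First I would differentiate $V$ along a solution, substitute $u'' = -bu' - ku$ from \eqref{e1}, and regroup to exhibit $V'$ as the quadratic form in $(u,u')$ with matrix
\[
Q(t) = \begin{pmatrix} h'(t) & h(t)-k(t) \\ h(t)-k(t) & -2b(t) \end{pmatrix}.
\]
Because $b > 0$ by (H1), the form associated with $Q$ is negative semidefinite precisely when $\det Q \ge 0$, i.e.\ $h'(t) \le -(h(t)-k(t))^2/(2b(t))$. With the choice $h = e^{1/b+k}$ this inequality is exactly hypothesis (H2), so $V'(t) \le 0$ and consequently $V(t) \le V(t_0)$ for every $t \ge t_0$.

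Next, the bound $1/b+k \ge M$ from (H1) yields $h(t) \ge e^M > 0$ for all $t \ge t_0$, so
\[
\min(1,e^M)\bigl(u(t)^2 + u'(t)^2\bigr) \le V(t) \le V(t_0) \le \max(1,h(t_0))\bigl(u_0^2 + u_1^2\bigr).
\]
This gives $u(t)^2 + u'(t)^2 \le C(t_0)(u_0^2+u_1^2)$ with $C(t_0) = \max(1,h(t_0))/\min(1,e^M)$, from which stability follows by taking $\delta(\epsilon,t_0) = \epsilon/\sqrt{C(t_0)}$ in the definition.

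The main obstacle is not analytic but structural: guessing the correct weight $h = e^{1/b+k}$ so that (H2) certifies negative semidefiniteness of $Q$ on the nose. Once that ansatz is in hand the computation is routine. Note that $M$ is permitted to be negative because only a positive lower bound on $h$ is required, and $h \ge e^M > 0$ for any finite $M$; this is what allows the negative-stiffness regime where $k(t)$ itself may be quite negative, provided it is compensated by a sufficiently small $1/b(t)$ term and the differential inequality (H2).
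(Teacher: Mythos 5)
Your proposal is correct and follows essentially the same route as the paper: the same Lyapunov functional $e^{1/b+k}u^2+(u')^2$, with (H2) used to make its derivative non-positive and (H1) used for the two-sided comparison with $u^2+(u')^2$. The only cosmetic difference is that you certify $V'\le 0$ via the determinant test for negative semidefiniteness of the $2\times 2$ quadratic form, whereas the paper completes the square; these are equivalent given $b>0$.
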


\begin{proof}
First, we define the Lyapunov functional
\[
E(t)=e^{\frac1{b}+k} u^2+(u')^2
\]
and compute its derivative along the solutions of \eqref{e1},
\[
E'(t)=\frac{d}{dt} [e^{\frac1{b}+k}] u^2 + 2(e^{\frac1{b}+k}- k)uu'
-2b(u')^2\,.
\]
Then, factoring $-2b$ in the last two terms, and completing the
square, we have
\begin{equation} \label{E'}
E'(t)=\big[\frac1{2b}(e^{\frac1{b}+k} -k)^2
+ \frac{d}{dt} e^{\frac1{b}+k}\big]u^2
-2b\big[\frac1{2b}(e^{\frac1{b}+k} -k)u-u'\big]^2\,.
\end{equation}
By \eqref{H2}, the coefficient of $u^2$ is non-positive, and because $b>0$,
$E'(t)\leq 0$; therefore,  $E(t)\leq E(t_0)$
for all $t\geq t_0$.

Now, we show that the zero solution is stable.
For $\epsilon>0$, we select $\delta>0$ such that
\[
\delta^2 < \min\{e^M,1\}\epsilon^2/\max\{1,\exp(\frac1{b(t_0)}+k(t_0))\}\,.
\]
Note that $u(t_0)^2+u'(t_0)^2<\delta^2$ implies
\[
\min\{e^M,1\}\epsilon^2 > \max\{1,\exp(\frac1{b(t_0)}+k(t_0))\}\delta^2
\geq E(t_0)\,.
\]
Also note that
\[
E(t_0)\geq E(t)\geq e^Mu(t)^2+u'(t)^2 \geq
\min\{e^M,1\}\big(u(t)^2+u'(t)^2\big)\,.
\]
The stability of the zero solution follows from the two
inequalities above. This completes the proof.
\end{proof}

Uniform stability is obtained under the additional assumption that
$\frac1{b}+k$ is bounded above; because, the delta in the proof
can be chosen independent of $t_0$.

\begin{remark} \rm
The above theorem makes the transition from instability (at the
frozen state) to stability take place. However, when $k(t)$ is
non-positive and non-decreasing, Conditions \eqref{H1} and \eqref{H2}
imply $b(t)$ growing exponentially, which is very restrictive.
To prove this remark, note that for non-decreasing $k$, we have
$(1/b)'\leq (1/b+k)'$.
Then by \eqref{H1}, $e^M\leq e^{\frac1{b}+k}$ and
$e^{\frac1{b}+k}-k\geq e^M$. Then by \eqref{H2}, $(1/b)'e^M \leq
-\frac{1}{2b} e^M$. This implies $b'\geq b/2$, which in turn
implies $b(t)\geq b(0)e^{t/2}$.
\end{remark}

\noindent\textbf{Example.} Among the equations with exponential damping,
there are stable equations that satisfy and some others that do not
satisfy \eqref{H1}-\eqref{H2}.
For instance, if $k=-1$ and $b=e^{n t}$, then \eqref{e1} has
solutions of the form
\[
\exp\big(\frac {n^2 t - e^{n t}}{2n} \big)
\Big( c_1\big[
    \mathcal{I}(-\frac 12 + \frac 1n,z)
   +\mathcal{I}(\frac 12 + \frac 1n,z)  \big]
+c_2\big[\mathcal{K}(\frac 12 - \frac 1n,z)
       -\mathcal{K}(\frac 12 + \frac 1n,z)\big] \Big)\, .
\]
where $c_1$ and $c_2$ are the integration constants, $z\equiv
{e^{n t}}/{2n}$, and $\mathcal{I}(a,z)$ and $\mathcal{B}(a,z)$ are
the modified Bessel functions of first and second kind,
respectively. When $n=1$ this solution converges uniformly to
$c_1$, when $n>1$ it converges to $2 c_1 \sqrt{n/\pi}$, as
$t\to\infty$. In both cases this implies stability. The zero
solution is also stable for $n\leq 1/2$. For instance, for $n=
1/2$ the solution converges to $32 c_1$, and for $n= 1/4$ it does
to $781250 c_1$. However, \eqref{H1}-\eqref{H2} are satisfied only
for $n\geq 2$.

In an attempt to weaken the growth restrictions on $b$, we
consider now the case where the damping is a positive constant.
Note that the assumptions below restrict the stiffness to remain
negative. Also note that the larger (smaller) the constant damping
is, the slower (faster) the negative stiffness is needed to be
compensated.

\begin{theorem} \label{bconst}
The zero solution of \eqref{e1} is stable if $b(t)$ is a positive
constant and for all $t\geq t_0$:
\begin{gather}
\text{There exists a positive constant $\alpha$ such that }
-k(t) \geq \alpha \label{H1'},\\
-k'+2k^2/b \leq 0,. \label{H2'}
\end{gather}
\end{theorem}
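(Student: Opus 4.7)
The plan is to mirror the Lyapunov-functional strategy from Theorem~\ref{lyapunov-stabl0}, but with a functional tailored to the constant-damping setting. Because hypothesis~\eqref{H1'} provides the uniform lower bound $-k(t)\geq\alpha>0$, the natural candidate is
\[
E(t)=-k(t)\,u(t)^2+u'(t)^2,
\]
which is positive definite and satisfies the two-sided bounds
\[
\min\{\alpha,1\}\bigl(u^2+(u')^2\bigr)\leq E(t),\qquad E(t_0)\leq\max\{-k(t_0),1\}\bigl(u_0^2+u_1^2\bigr).
\]
No exponential weight is needed here, precisely because $b$ is constant.

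First I would differentiate $E$ along a solution of \eqref{e1}, substituting $u''=-bu'-ku$, to obtain
\[
E'(t)=-k'u^2-4k\,uu'-2b(u')^2.
\]
Since $b>0$, completing the square in $u'$ yields
\[
E'(t)=\Bigl(\tfrac{2k^2}{b}-k'\Bigr)u^2-2b\Bigl(u'+\tfrac{k}{b}u\Bigr)^{\!2}.
\]
Hypothesis~\eqref{H2'} is exactly $2k^2/b-k'\leq 0$, so both terms on the right are non-positive; hence $E'(t)\leq 0$ and $E(t)\leq E(t_0)$ for every $t\geq t_0$.

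The stability conclusion then follows the template from the previous proof. Given $\epsilon>0$, I would pick $\delta>0$ with $\delta^2<\min\{\alpha,1\}\,\epsilon^2/\max\{-k(t_0),1\}$, so that $u_0^2+u_1^2<\delta^2$ forces
\[
\min\{\alpha,1\}\bigl(u(t)^2+u'(t)^2\bigr)\leq E(t)\leq E(t_0)<\min\{\alpha,1\}\,\epsilon^2,
\]
and therefore $u(t)^2+u'(t)^2<\epsilon^2$, as required.

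The only substantive obstacle is selecting the right Lyapunov functional; the shape of~\eqref{H2'} is the decisive hint. Starting from an ansatz $E=f(t)u^2+(u')^2$ and completing the square in $u'$ produces a $u^2$-coefficient of $f'+(f-k)^2/(2b)$, and matching this to \eqref{H2'} forces $(f-k)^2=4k^2$, i.e.\ $f=-k$. Once this choice is made, all remaining steps are routine algebra, and no separate sign analysis for $k$ is needed because \eqref{H1'} keeps it strictly negative.
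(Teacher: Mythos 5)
Your proposal is correct and coincides with the paper's own proof: the same Lyapunov functional $E(t)=-k(t)u^2+(u')^2$, the same completion of the square giving $E'=(-k'+2k^2/b)u^2-2b(u'+\tfrac{k}{b}u)^2$, and the same choice of $\delta$ via the bounds $\min\{\alpha,1\}$ and $\max\{-k(t_0),1\}$. (Your sign reading of \eqref{H2'} is the right one; the paper's phrase ``non-negative'' for the $u^2$-coefficient is a typo for ``non-positive.'')
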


\begin{proof}
We define the Lyapunov functional
\[
E(t)=-k(t)u(t)^2 + u'(t)^2,
\]
whose derivative along solutions of \eqref{e1} is
\begin{align*}
E'(t)&=-k' u^2 -2kuu' + 2u'u''\\
%&=-k' u^2 -2kuu' + 2u'(-bu'-ku)\\
&=-k' u^2 -4kuu' -2b(u')^2\\
&=[-k'+2\frac{k^2}b]u^2 - 2b[\frac{k}b u + u']^2.
\end{align*}
By \eqref{H2'}, the coefficient of $u^2$ is non-negative.
Since $b>0$, $E'(t)\leq 0$ so that $E(t)\leq E(t_0)$ for all $t\geq t_0$.
To show stability of the zero solution, for each positive $\epsilon$,
we select $\delta>0$ such that
\[
\max\{1,-k(t_0)\}\delta^2 < \min\{1,\alpha\}\epsilon^2.
\]
With this delta, we can show that the definition of stability is satisfied,
and hence the proof is complete.
\end{proof}

Note that \eqref{H2'} implies $k'\geq 2k^2/b$ which yields
a lower bound for the rate of change in $k$. Since $b$ is constant, this
inequality provides bounds for the rate of change in the
eigenvalues of $A(t)$:
\[
\frac{d \lambda_+}{dt} \leq -\frac{2k^2}{b\sqrt{b^2-4k}}\leq0
\quad\text{and}\quad
\frac{d \lambda_-}{dt} \geq \frac{2k^2}{b\sqrt{b^2-4k}}\geq0\,.
\]
So that the transition to stability happens when both eigenvalues
approach zero sufficiently fast.

\section{Stability using a fixed point theorem}

In this section we eliminate the restriction
that $b$ must grow exponentially, by using a fixed point argument
similar to those used in \cite{burton}.
We start by stating a condition that is necessary (but not sufficient)
for stability.

\begin{lemma} \label{lemness}
Assume $k(t)\leq 0$ for $t\geq t_0$. Then the condition
\begin{equation}
\int_{t_0}^\infty e^{-\int_{t_0}^s b} ds< \infty \label{fix1}
\end{equation}
is necessary for stability of \eqref{e1}.
\end{lemma}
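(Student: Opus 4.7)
The plan is to argue the contrapositive. Since \eqref{e1} is linear, stability of the zero solution is equivalent to boundedness of every solution on $[t_0,\infty)$, so it suffices to exhibit a single unbounded solution under the assumption that the integral in \eqref{fix1} diverges.

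I would choose the initial data $u(t_0)=0$, $u'(t_0)=1$ and apply the integrating factor $\mu(t):=\exp\bigl(\int_{t_0}^t b(r)\,dr\bigr)$ to rewrite \eqref{e1} as $(\mu u')'=-k\,\mu\, u$. Since $-k(t)\ge 0$, the right-hand side is non-negative on any interval where $u\ge 0$; there, $\mu u'$ is non-decreasing, and integrating from $t_0$ gives $\mu(t)u'(t)\ge 1$, hence $u'(t)\ge 1/\mu(t)$ and, by a second integration, $u(t)\ge \int_{t_0}^t 1/\mu(s)\,ds$. Because $b$ is continuous, $\mu$ is positive, so this lower bound is strictly positive for $t>t_0$ and, by hypothesis, diverges as $t\to\infty$.

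The main subtlety is upgrading this conditional inequality into an unconditional one by proving $u(t)\ge 0$ for all $t\ge t_0$. I would argue by contradiction: if there is a first zero $T>t_0$ after $t_0$, then $u>0$ on $(t_0,T)$, so the derivation above applies on the closed interval $[t_0,T]$ and yields $u'(T)\ge 1/\mu(T)>0$; this contradicts the necessary condition $u'(T)\le 0$ required for $u$ to descend from positive values back to $0$. Hence no such $T$ exists, and the global lower bound on $u$ holds.

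Finally, by linearity, starting from $(0,\delta)$ produces $\delta\, u(t)$, which is still unbounded for every $\delta>0$; this defeats any candidate $\delta$ in the stability definition. The principal obstacle, as anticipated, is the positivity argument, but the first-zero-after-$t_0$ device handles it cleanly, and the remainder is a routine integrating-factor computation.
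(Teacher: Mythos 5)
Your proposal is correct and follows essentially the same route as the paper: both rewrite \eqref{e1} in the self-adjoint form $(e^{\int b}u')'=-k e^{\int b}u$, use $-k\ge 0$ to make $e^{\int b}u'$ non-decreasing while $u\ge 0$, and integrate twice to get the unbounded lower bound $\int_{t_0}^t e^{-\int_{t_0}^s b}\,ds$. The only cosmetic differences are your choice $u(t_0)=0$ (the paper takes $u(t_0)>0$) and your first-zero contradiction in place of the paper's maximal-interval continuation, which are the same device.
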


\begin{proof}
First, using the integrating factor $\exp\big(\int_{t_0}^t b(s)\,ds\big)$
we transform \eqref{e1} into the equivalent equation
\begin{equation} \label{e1e}
\big(e^{\int b}u'\big)'+k e^{\int b} u=0
\end{equation}
which is used for setting up a contrapositive argument.
Let the initial values $u(t_0)$ and $u'(t_0)$ be positive.
Then by the continuity of the solution there is a non-empty
maximal interval $[t_0,t_1]$ where $u(t)\geq 0$. On this
interval,
\[
\big(e^{\int b}u'\big)'= -ke^{\int b} u \geq 0.
\]
So that $e^{\int b}u'$ is non-decreasing; hence,
\[
u'(t) \geq u'(t_0) e^{-\int_{t_0}^t b} >0\,.
\]
Therefore, $u(t)$ is increasing and the maximal interval
can be extended to $[t_0,\infty)$. Integration on the above
inequality yields
\[
u(t)\geq u(t_0)+u'(t_0)\int_{t_0}^t e^{-\int_{t_0}^s b}ds, \quad
\forall t\geq t_0\,.
\]
Note that when \eqref{fix1} is not satisfied, the solution
$u(t)$ is unbounded which implies  \eqref{e1} being unstable.
This completes the proof.
\end{proof}

\begin{remark} \label{specialk} \rm
For each damping coefficient $b(t)$, there is a stiffness
coefficient $k$ so that \eqref{e1} is stable if and only if
\eqref{fix1} is satisfied. In fact,
$$
k(t)=-\exp\big(-2\int_{t_0}^t b(s) ds\big)
$$
 leads to  $\exp(\pm \int_{t_0}^t e^{-\int_{t_0}^s
b}\,ds)$ being solutions of \eqref{e1}. To check stability, we use
that all solutions can be written as $u(t)=c_1e^r+c_2e^{-r}$ with
$r(t)=\int_{t_0}^t e^{-\int_{t_0}^s b}\,ds$, and that by
\eqref{fix1}, $r(t)$ and $r'(t)$ are bounded. On the other hand if
\eqref{fix1} is not satisfied, one of the two solutions is
unbounded which leads to instability.

To check that the two functions above are solutions, let
$\phi(t) =e^{r(t)}$. Then $\phi'=r'e^r$  and $\phi''=(r')^2e^r+r''e^r$.
Since $r'=e^{-\int b}$, $(r')^2=e^{-2\int b}=-k$,
and $r''=-b e^{-\int b}$, it follows that
 $k\phi+(r')^2e^r=0$ and $b\phi'+r''e^r=0$. Therefore,
$\phi$ is a solution of \eqref{e1}.
\end{remark}

As an illustration of the remark above, we have the following two examples:
Firstly, when $b(t)=2/(t+1)$,  \eqref{fix1} is satisfied and the equation
\[
u''(t)+\frac 2{t+1}u'(t)-\frac 1{(t+1)^4}u(t)=0
\]
has solutions of the form $u(t)= c_1 e^{1/(t+1)} +c_2
e^{-1/(t+1)}$. Since both exponential functions are bounded on
$[0,\infty)$, we can prove stability. Secondly, when
$b(t)=1/(t+1)$, \eqref{fix1} is not satisfied and the equation
\[
u''(t)+\frac 1{t+1}u'(t)-\frac 1{(t+1)^2}u(t)=0
\]
has solutions of the form
$u(t)= c_1 (t+1) +c_2 /(t+1)$.
Since the first  function is unbounded on $[0,\infty)$,
we have instability.

Now we set up a mapping whose fixed points are solutions of \eqref{e1}.
 From \eqref{e1e}, it follows that the solution $u(t)$ satisfies
\[
u'(t)=u'(t_0)e^{-\int_{t_0}^t b} - e^{-\int_{t_0}^t b}
\int_{t_0}^t k(\tau) e^{\int_{t_0}^\tau b}
u(\tau)\,d\tau\,,
\]
and
\begin{equation}
u(t)=u(t_0)+u'(t_0)\int_{t_0}^t e^{-\int_{t_0}^s b}\,ds
-\int_{t_0}^t   e^{-\int_{t_0}^s b} \int_{t_0}^s k(\tau) e^{\int_{t_0}^\tau b}
u(\tau)\,d\tau\,ds
:= F[u](t)
\label{eF}
\end{equation}
For $\epsilon>0$, we define the following convex subset of the space of
continuous differentiable functions. Let
\begin{align*}
B_\epsilon&=\big\{ u: |u(t_0)|\leq \epsilon/4,\;
|u'(t_0)|\leq \min\{\epsilon/4,\epsilon/(4\int_{t_0}^\infty e^{-\int_{t_0}^\tau
 b} d\tau)\},\\
&\quad |u(t)|\leq \epsilon,\; |u'(t)|\leq \epsilon \; \forall t\geq t_0\big\}\,.
\end{align*}
Using the supremum norm $\|u\|_\infty=\sup_{t\geq t_0}|u(t)|$, we can
show that this set is closed under the norm
$\|u\|_\infty+\|u'\|_\infty$.
Note that under assumption \eqref{fix1} this set is not empty;
at least, there are constant functions in this set.

For the next result we define the hypotheses:
\begin{gather}
 e^{-\int_{t_0}^s b} < 2 \quad \forall s\geq t_0, \label{fix2}
\\
\int_{t_0}^\infty   e^{-\int_{t_0}^s b} \int_{t_0}^s |k(\tau)|
 e^{\int_{t_0}^\tau b} d\tau\,ds < \frac 12, \label{fix3}
\\
e^{-\int_{t_0}^s b} \int_{t_0}^s |k(\tau)|
 e^{\int_{t_0}^\tau b} d\tau < \frac 12 \quad \forall s\geq t_0\,.  \label{fix4}
\end{gather}

\begin{lemma} \label{fixed-point}
Under assumptions \eqref{fix1} and \eqref{fix2}--\eqref{fix4}, the
transformation  $F$ maps
$B_\epsilon$ into $B_\epsilon$ and has a fixed point.
\end{lemma}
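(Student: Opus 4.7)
The plan is to apply Banach's contraction mapping theorem to $F$ on the set $B_\epsilon$, equipped with the metric induced by $\|u\|_\infty + \|u'\|_\infty$. Since the paper has already observed that $B_\epsilon$ is closed in this norm, it is complete as a closed subset of the Banach space of bounded $C^1$ functions on $[t_0,\infty)$, and \eqref{fix1} ensures $B_\epsilon$ is nonempty. The work then splits into two independent verifications: (a) the self-mapping property $F[B_\epsilon]\subset B_\epsilon$, and (b) that $F$ is a strict contraction in the above norm.

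For (a), evaluating the defining formula at $t=t_0$ gives $F[u](t_0)=u(t_0)$, and differentiating the formula (which reproduces the first displayed expression for $u'$) gives $(F[u])'(t_0)=u'(t_0)$, so the initial-value restrictions in the definition of $B_\epsilon$ are automatically inherited. For the sup-norm bound on $F[u](t)$, I would estimate the three terms of the definition separately: $|u(t_0)|\leq \epsilon/4$; the second term is $|u'(t_0)|\int_{t_0}^\infty e^{-\int_{t_0}^s b}\,ds\leq \epsilon/4$ by the specific choice of $|u'(t_0)|$ built into $B_\epsilon$; and the double-integral term is at most $\|u\|_\infty$ times the integral in \eqref{fix3}, giving at most $\epsilon/2$. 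These sum to $\epsilon$. For $|(F[u])'(t)|$, use \eqref{fix2} to bound $|u'(t_0)|e^{-\int_{t_0}^t b}\leq 2|u'(t_0)|\leq \epsilon/2$, and \eqref{fix4} together with $|u|\leq \epsilon$ to bound the single-integral term by $\epsilon/2$, again totaling $\epsilon$.

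For (b), the difference $F[u]-F[v]$ depends only on $u-v$. Taking absolute values inside the same integrals yields
\[
|F[u](t)-F[v](t)|\leq L_3\|u-v\|_\infty, \qquad |(F[u])'(t)-(F[v])'(t)|\leq L_4\|u-v\|_\infty,
\]
where $L_3<1/2$ is the integral in \eqref{fix3} and $L_4<1/2$ is the supremum in \eqref{fix4}. Taking suprema in $t$ and adding gives
\[
\|F[u]-F[v]\|_\infty + \|(F[u])'-(F[v])'\|_\infty \leq (L_3+L_4)\bigl(\|u-v\|_\infty+\|u'-v'\|_\infty\bigr),
\]
with $L_3+L_4<1$, so $F$ is a strict contraction and Banach's theorem delivers a (unique) fixed point.

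The main obstacle I expect is the accounting in part (a): the constants $1/4,1/4$ in the definition of $B_\epsilon$, the factor $2$ in \eqref{fix2}, and the $1/2$'s in \eqref{fix3}--\eqref{fix4} are calibrated so that every estimate closes up at exactly $\epsilon$, and a single misplaced factor would break the self-mapping property. The contraction step, by contrast, is essentially the same estimates applied to $u-v$ in place of $u$, and becomes transparent once one notices that $F[u]$ depends on $u$ but not on $u'$, so the two contributions from \eqref{fix3} and \eqref{fix4} can be combined against a single $\|u-v\|_\infty$.
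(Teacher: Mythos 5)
Your proposal is correct and takes essentially the same route as the paper: the identical term-by-term accounting ($\epsilon/4+\epsilon/4+\epsilon/2$ and $\epsilon/2+\epsilon/2$) for the self-mapping property, and the same contraction estimate driven by \eqref{fix3}--\eqref{fix4}, the only cosmetic difference being that the paper writes out the Picard iteration $w_n=F[w_{n-1}]$ explicitly instead of citing Banach's theorem by name. (A negligible quibble: the supremum in \eqref{fix4} need only be $\leq 1/2$, but since the integral in \eqref{fix3} is strictly below $1/2$ your bound $L_3+L_4<1$ still holds.)
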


\begin{proof}
Let $u$ be a function in $B_\epsilon$. Then by \eqref{fix2} and \eqref{fix4},
\[
|F[u]'(t)|\leq |u'(t_0)|e^{-\int_{t_0}^t b}
+\|u\|_\infty  e^{-\int_{t_0}^t b}
\int_{t_0}^t |k(\tau)| e^{\int_{t_0}^\tau b}\,d\tau
< \epsilon/2 +\epsilon/2 = \epsilon\,.
\]
By \eqref{fix3},
\begin{align*}
|F[u](t)|&\leq |u(t_0)|+|u'(t_0)|\int_{t_0}^t e^{-\int_{t_0}^t b}
+\|u\|_\infty \int_{t_0}^t e^{-\int_{t_0}^s b}
\int_{t_0}^s |k(\tau)| e^{\int_{t_0}^\tau b} \,d\tau\,ds \\
&< \epsilon/4 +\epsilon/4 +\epsilon/2 =\epsilon\,.
\end{align*}
Therefore, $F$ maps $B_\epsilon$ into itself. To find a fixed
point for $F$, we define an iterative process that can start at any
function $w_0$ in $B_\epsilon$. For $n=1,2,\dots$, define
$w_n=F[w_{n-1}]$. Note that the values $w_n(t_0)$ and $w_n'(t_0)$
remain unchanged in these iterations. Also note that by
\eqref{fix3} and \eqref{fix4},
\[
|(w_{n+1}-w_n)(t)| \leq \int_{t_0}^t e^{-\int_{t_0}^s b}
\int_{t_0}^s |k(\tau)| e^{\int_{t_0}^\tau b}
\|w_n-w_{n-1}\|_\infty \,d\tau\,ds
<\frac 12 \|w_n-w_{n-1}\|_\infty
\]
and
\[
|(w_{n+1}'-w_n')(t)| \leq  e^{-\int_{t_0}^t b}
\int_{t_0}^t |k(\tau)| e^{\int_{t_0}^\tau b}
\|w_n-w_{n-1}\|_\infty \,d\tau
<\frac 12 \|w_n-w_{n-1}\|_\infty\,.
\]
Therefore, $F$ is a contraction and $\{w_n\}$ converges to a fixed
point of $F$ in $B_\epsilon$; hence the solution of \eqref{e1e} is
in the set $B_\epsilon$ and satisfies the conditions for
stability.
 This completes the proof.
\end{proof}

We are ready to present the main result of this section.

\begin{theorem} \label{fix-stable}
Under assumptions \eqref{fix1} and \eqref{fix2}--\eqref{fix4},
the zero solution of \eqref{e1} is stable.
\end{theorem}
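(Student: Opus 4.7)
The substantive work has already been done in Lemma \ref{fixed-point}: under the given hypotheses, the operator $F$ defined in \eqref{eF} maps $B_\epsilon$ into itself and is a contraction there, so it has a fixed point in $B_\epsilon$. My plan for Theorem \ref{fix-stable} is simply to translate this conclusion into the language of the stability definition.

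Given $\epsilon>0$, I would set
\[
\delta = \min\!\left\{\frac{\epsilon}{4},\ \frac{\epsilon}{4\int_{t_0}^\infty e^{-\int_{t_0}^\tau b(s)\,ds}\,d\tau}\right\},
\]
which is strictly positive by \eqref{fix1}. For any initial data with $\max\{|u_0|,|u_1|\}<\delta$, the integral equation \eqref{eF} (read with these specific $u_0,u_1$) defines an operator $F$ to which Lemma \ref{fixed-point} applies, producing a fixed point $u^*\in B_\epsilon$ with $u^*(t_0)=u_0$ and $(u^*)'(t_0)=u_1$. Differentiating \eqref{eF} twice recovers \eqref{e1}, so by the uniqueness of solutions to \eqref{e1}--\eqref{e2} stated in the introduction, $u^*$ is the unique solution of our IVP.

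It remains to extract strict bounds. Inspecting the two estimates in the proof of Lemma \ref{fixed-point}, I note that the strict initial bounds $|u_0|<\epsilon/4$ and $|u_1|<\epsilon/(4\int_{t_0}^\infty e^{-\int_{t_0}^\tau b\,ds}\,d\tau)$, together with the strict conditions \eqref{fix2}--\eqref{fix4}, in fact yield strict inequalities $|F[u](t)|<\epsilon$ and $|F[u]'(t)|<\epsilon$ for every $u\in B_\epsilon$ at each $t\geq t_0$. Applying this to the fixed point $u^*=F[u^*]$ gives $\max\{|u^*(t)|,|(u^*)'(t)|\}<\epsilon$ for all $t\geq t_0$, which is exactly the definition of stability.

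The only genuinely delicate point is this last piece of bookkeeping — confirming that the non-strict bounds defining membership in $B_\epsilon$ nevertheless propagate as strict bounds along the fixed-point trajectory. Once that is verified, Theorem \ref{fix-stable} is an essentially immediate corollary of Lemma \ref{fixed-point}, and no further analytic obstacle appears.
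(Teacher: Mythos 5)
Your proposal is correct and takes essentially the same route as the paper: choose $\delta=\min\{\epsilon/4,\epsilon/(4\int_{t_0}^\infty e^{-\int_{t_0}^\tau b})\}$, invoke Lemma \ref{fixed-point} to realize the solution as the fixed point of $F$ in $B_\epsilon$, and read off the strict bounds $|u(t)|<\epsilon$, $|u'(t)|<\epsilon$. The paper's proof is in fact terser than yours, omitting the uniqueness identification and the strict-inequality bookkeeping that you spell out.
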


\begin{proof} For each $\epsilon>0$, we define $B_\epsilon$ as above
and set $\delta=\min\{\epsilon/4,\epsilon/(4\int_{t_0}^\infty e^{-\int b})\}$.
For initial conditions $|u(t_0)|<\delta$ and  $|u'(t_0)|<\delta$, the
solution is obtained as a fixed point of $F$; therefore,
$|u(t)|<\epsilon$ and  $|u'(t)|<\epsilon$ which implies stability
of \eqref{e1}.
\end{proof}

\begin{remark} \label{specialb} \rm
For each stiffness coefficient  $k(t)$, there exists a damping
coefficient $b(t)$ that makes \eqref{e1} stable. In fact, for a
constant $\alpha>1$, we let
\begin{equation} \label{bcondition}
b(t)\geq 2|k(t)|(t+\alpha)^2 + 2/(t+\alpha)\,,
\end{equation}
so that the conditions in Theorem \ref{fix-stable} are satisfied
with $t_0=0$. Condition \eqref{fix2} follows from $b(t)\geq 0$.
Note that $b\geq 2/(t+\alpha)$ and $\int_0^t b \geq \int_0^t
2/(t+\alpha) =2\ln(t+\alpha)$. So that
\[
\exp(-\int_0^t b )\leq \exp(-2\ln(t+1\alpha))=(t+\alpha)^{-2}.
\]
Condition \eqref{fix1} follows from integrating in the inequality
above. Note that from \eqref{bcondition},
 $|k| \leq (t+\alpha)^{-2}b/2 -(t+\alpha)^{-3}$ which implies
\[
|k|e^{\int b}  \leq \frac12 (t+\alpha)^{-2}b e^{\int b}
-(t+\alpha)^{-3}e^{\int b}
\]
Integrating on $[0,t]$, we have
\[
\int_0^t |k|e^{\int b} d\tau \leq  \frac12 (t+\alpha)^{-2} e^{\int b} -\frac{\alpha}{2}.
\]
Then
\[
e^{-\int b}\int_0^t |k|e^{\int b} d\tau < \frac12 (t+\alpha)^{-2} \leq \frac12
\]
which is \eqref{fix4}. Integrating on $[0,\infty)$,
we have
\[
\int_0^\infty e^{-\int b}\int_0^t |k|e^{\int b} d\tau dt <
\frac12 \int_0^\infty (t+\alpha)^{-2} dt = \frac1{2\alpha^2}\leq \frac12
\]
which is \eqref{fix3}. Therefore, \eqref{e1} is stable with this
choice of $b(t)$.
\end{remark}

\begin{remark} \label{no-belief} \rm
The believed instability is disproved as follows: For each $\epsilon>0$,
we find $b(t)$ and $k(t)$ such that  (\ref{e1}) is stable and the rate of
change in the eigenvalues of $A(t)$ is less than $\epsilon$,
in absolute value.

Let $b(t)=4/(t+\alpha)$ and $k(t)=-1/(t+\alpha)^3$, where
$\alpha=\max\{1,\sqrt{5/\epsilon}\}$.
Note that \eqref{bcondition} is satisfied, and hence the conditions
for Theorem \ref{fix-stable} are satisfied; so that \eqref{e1} is stable.

The rate of change in the eigenvalues of $A(t)$ is
\[
\frac{d\lambda_\pm}{dt}
= \frac 12\Big(-1 \pm \frac b{\sqrt{b^2-4k}}\Big)\frac{db}{dt}
\mp \frac 1{\sqrt{b^2-4k}}\frac{dk}{dt}\,.
\]
The  absolute value of the 
coefficient of $db/dt$ is bounded by $1$, while
$|db/dt|=4/(t+\alpha)^2\leq 4/\alpha^2$. The coefficient of
$dk/dt$ is bounded as follows
\[
\big|\frac{1}{\sqrt{b^2-4k}}\big|
= \big|\frac{1}{b^2-4k}\big|^{1/2}
\leq \frac 1b
= \frac{(t+\alpha)}4.
\]
Since $dk/dt=3/(t+\alpha)^4$, and $\alpha\geq 1$,
\[
|\frac{d\lambda_\pm}{dt}|
\leq \frac{4}{\alpha^2}+ \frac{3}{4(t+\alpha)^3}
\leq \frac{4}{\alpha^2}+ \frac{3}{4\alpha^3}
\leq \frac{19}{4\alpha^2}<\epsilon
\]
Which proves the claim of this remark.
\end{remark}

We conclude this section with a stability result for the non-linear
case.

\begin{remark} \label{nonlinear} \rm
As an applications to non-linear equations, we consider
the differential equation
\begin{equation} \label{e10}
u''(t)=f(t,u',u),
\end{equation}
where $f(t,0,0)=0$ and $f$ is differentiable at $(t,0,0)$.
Then $u(t)\equiv 0$ is a solution of \eqref{e10}. The stability
of the zero solution is studied by
considering the linearized version
\begin{equation}
u''(t)= f_2(t,0,0)u'+f_3(t,0,0)u\,, \label{e11}
\end{equation}
where $f_2(t,x,y)=\partial_x f(t,x,y)$ and
$f_3(t,x,y)=\partial_y f(t,x,y)$.
Note that this linear approximation is valid only for small values
of $u$ and of $u'$.
The stability of \eqref{e11} is studied by setting
\[
b(t)=-f_2(t,0,0), \quad k(t)=-f_3(t,0,0)
\]
and applying results from this section, without any further
modifications. However, the instability results in the next section
may not hold because the linear approximation is valid
only for values $u$, $u'$ close to zero.
\end{remark}

\section{Instability using Lyapunov and Chetaev functionals}

Instability of \eqref{e1}, with negative stiffness, was obtained by
Ignatiev \cite{ignatiev}, assuming
that $|b|$, $|k|$, $|k'|$ are bounded above, and that $-k$ and
$|\frac12 \frac{k'}{k}+b|$ are bounded below by two positive constants.
In the same article, instability is proved when
$\frac14 b^2+k\leq 0$, and when $\frac14 b^2+k> 0$ with some additional
assumptions.

Our next result states that the zero solution is unstable, for non-positive
stiffness and non-positive damping.

\begin{theorem} \label{shetaev1}
If $b(t)\leq 0$ and $k(t)\leq 0$ for all $t\geq t_0$, then
the zero solution is unstable.
\end{theorem}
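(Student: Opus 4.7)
The plan is to exhibit a direct contradiction with stability by constructing solutions whose size escapes any prescribed bound, no matter how small the initial data. The key observation is that when both coefficients are non-positive the positive quadrant in the $(u,u')$-plane is forward invariant, and on this quadrant the equation $u''=-bu'-ku$ forces $u''\geq 0$, which produces unbounded growth.

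Concretely, I would proceed as follows. For arbitrary $\delta>0$ I would choose initial data $u(t_0)=u'(t_0)=\delta/2$, so that $u(t_0)^2+u'(t_0)^2=\delta^2/2<\delta^2$. Let
\[
t_1=\sup\{\,t\geq t_0 : u(s)\geq 0 \text{ and } u'(s)\geq 0 \text{ for all } s\in[t_0,t]\,\}.
\]
On $[t_0,t_1)$, the hypotheses $b\leq 0$ and $k\leq 0$ give
\[
u''(s) = -b(s)u'(s)-k(s)u(s)\geq 0,
\]
so $u'$ is non-decreasing on this interval and therefore $u'(s)\geq u'(t_0)=\delta/2>0$. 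Consequently $u$ is strictly increasing on $[t_0,t_1)$ and $u(s)\geq u(t_0)=\delta/2>0$.

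Next I would show $t_1=\infty$ by a continuity argument: if $t_1$ were finite, then by continuity $u(t_1)\geq \delta/2>0$ and $u'(t_1)\geq \delta/2>0$, which by the local existence of the solution would allow the interval on which both quantities are non-negative to be extended beyond $t_1$, contradicting the definition of $t_1$. Hence both $u\geq 0$ and $u'\geq u'(t_0)=\delta/2$ hold on all of $[t_0,\infty)$. Integrating $u'(s)\geq \delta/2$ from $t_0$ to $t$ gives
\[
u(t)\geq \frac{\delta}{2}+\frac{\delta}{2}(t-t_0)\to\infty \quad \text{as } t\to\infty.
\]
Thus for any $\epsilon>0$ and any $\delta>0$ we can find $t\geq t_0$ with $u(t)^2+u'(t)^2>\epsilon^2$, which negates the definition of stability and finishes the proof.

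The argument is essentially routine; the only subtlety is the forward invariance of the closed positive quadrant, which I expect to be the only place where care is needed. This is handled cleanly by the maximality/continuity argument above, using only non-negativity of $-b$, $-k$, $u$, $u'$ on the relevant interval. No Chetaev functional is strictly required here, though one could equally package the argument by noting that $V(t)=u(t)u'(t)$ satisfies $V'=(u')^2-bu u'-ku^2\geq 0$ on the positive quadrant, giving another route to the same conclusion.
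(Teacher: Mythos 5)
Your argument is correct and is essentially the same as the paper's: positive initial data, forward invariance of the positive quadrant via $u''=-bu'-ku\geq 0$ and a maximality/continuity argument, then $u'\geq u'(t_0)>0$ forces $u(t)\to\infty$, contradicting stability for arbitrarily small $\delta$. The only cosmetic difference is that you define the maximal interval by non-negativity of $u$ and $u'$ while the paper defines it by $u''\geq 0$; the logic is identical.
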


\begin{proof}
For each pair of initial values $u(t_0)>0$ and $u'(t_0)>0$, by continuity
of the solution and its derivative, there exists an interval
where $u(t)\geq 0$ and $u'(t)\geq 0$. Since $b\leq 0$ and $k\leq 0$,
from \eqref{e1}, it follows that $u''(t)\geq 0$;
i.e., $u$ is concave up and  $u'(t)\geq u'(t_0)>0$ on this  interval.
Let $t_1$ be the largest value such that $u''(t)\geq 0$ for $t_0\leq t \leq t_1$.
If $t_1<+\infty$, the graph of $u$ is concave up
on $[t_0,t_1]$, $u(t_1)\geq u(t_0)>0$ and $u'(t_1)\geq u'(t_0)>0$.
By continuity of the solution and its derivative, there exists $t_2>t_1$,
 such that
$u(t)\geq 0$ and $u'(t)\geq 0$ on $[t_1,t_2]$. Since $b\leq 0$ and $k\leq 0$,
from \eqref{e1}, it follows that $u''(t)\geq 0$ on $[t_1,t_2]$.
This contradicts $t_1$ being maximal; therefore, $t_1=+\infty$.

Because $t_1=+\infty$, the graph of $u$ is concave up
and  $u'(t)\geq u'(t_0)>0$ for all $t\geq t_0$.
Therefore, $\lim_{t\to\infty}u(t)=+\infty$ for all arbitrarily
small and positive initial values. This implies instability of the
zero solution and completes the proof.
\end{proof}

\noindent\textbf{Example.} The equation
\begin{equation}
u''(t)-\frac 1t u'(t)-\frac 1t u(t)=0
\label{examNegBK}
\end{equation}
satisfies the conditions
of Theorem \ref{shetaev1}
and has solutions of the form
\[
u(t)= c_1t\mathcal{I} ( 2,2\sqrt{t}) + c_2t \mathcal{K}( 2,2\sqrt{t} )\, .
\]
For large $t$ the dominant terms in each branch of this solution are
\[
\frac{c_1}{2\sqrt {\pi }}\,t^\frac 34\, e^{2\sqrt{t}}
+ \frac{c_2\sqrt {\pi }}{2}\,t^\frac 34\, e^{-2\sqrt{t}}\, .
\]
which diverges as $t\to\infty$, which implies instability.

\begin{theorem} \label{shetaev2}
The zero solution of \eqref{e1} is unstable under the following
conditions: For all $t\geq t_0$,
\begin{gather}
\label{H5} \text{there exists a positive constant $\alpha$ such that }
-k(t)\geq \alpha\,, \\
\label{H6} k'(t)+2b(t)k(t)\geq 0\,.
\end{gather}
\end{theorem}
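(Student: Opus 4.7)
The plan is to use a Chetaev-type argument with the functional
\[
V(t)=u(t)\,u'(t),
\]
which is positive on the cone $\{u>0,\,u'>0\}$. For any $\delta>0$ I would pick initial data $u(t_0)=u'(t_0)=\delta/2$, so $V(t_0)>0$ while the data sits inside the $\delta$-ball around the origin. The goal is to show $V(t)\to\infty$, which forces $\max\{|u(t)|,|u'(t)|\}$ to escape any prescribed $\epsilon$-ball and therefore contradicts the stability definition given in the introduction.

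Differentiating along solutions of \eqref{e1} gives
\[
V'(t)=(u')^2+u u''=(u')^2+|k(t)|\,u^2-b(t)\,V(t),
\]
and by \eqref{H5} together with the AM--GM inequality $(u')^2+\alpha u^2\ge 2\sqrt{\alpha}\,u u'$, as long as $V\ge 0$ one obtains the differential inequality
\[
V'(t)\ge \bigl(2\sqrt{\alpha}-b(t)\bigr)V(t).
\]
A standard continuity/Gronwall argument then keeps the cone forward invariant: since $V(t_0)>0$ and the inequality forces $V(t)\ge V(t_0)\exp\!\int_{t_0}^t(2\sqrt{\alpha}-b)>0$ on any interval on which $V\ge 0$, the set $\{V>0\}$ cannot be left, so $u$ and $u'$ remain positive for all $t\ge t_0$.

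The crucial step is to convert \eqref{H6} into an upper bound on $\int_{t_0}^t b$. Rewriting \eqref{H6} as
\[
\bigl(k(t)\,e^{2\int_{t_0}^t b(s)\,ds}\bigr)'\ge 0
\]
gives $k(t)e^{2\int_{t_0}^t b}\ge k(t_0)$; since $k<0$ this is equivalent to $|k(t)|e^{2\int_{t_0}^t b}\le |k(t_0)|$, and combining with \eqref{H5} yields the uniform bound
\[
\int_{t_0}^t b(s)\,ds\le \tfrac12\ln\bigl(|k(t_0)|/\alpha\bigr).
\]
Substituting this back into the differential inequality for $V$ gives
\[
V(t)\ge V(t_0)\sqrt{\alpha/|k(t_0)|}\;e^{2\sqrt{\alpha}(t-t_0)}\longrightarrow\infty.
\]
Since $u,u'>0$, the divergence $u(t)u'(t)\to\infty$ implies $\max\{|u(t)|,|u'(t)|\}\to\infty$, and instability follows at once.

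The main obstacle is exactly this uniform bound on $\int b$: the hypotheses impose no direct sign restriction on $b(t)$, and without such a bound the inequality $V'\ge(2\sqrt{\alpha}-b)V$ need not integrate to unbounded growth when $b$ is large and positive. The algebraic identity $(k\,e^{2\int b})'=(k'+2bk)\,e^{2\int b}$ is precisely what makes condition \eqref{H6} provide that bound and close the Chetaev argument.
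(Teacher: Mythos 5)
Your proof is correct, but it takes a genuinely different route from the paper's. The paper works with the functional $V=u^2+\frac{1}{k}(u')^2$, chooses data with $V(t_0)<0$, observes that \eqref{H6} makes $V'=-\frac{k'+2bk}{k^2}(u')^2\le 0$, and then splits into cases ($V\to-\infty$ forces $u'$ unbounded; $V$ bounded below forces $|u'|$ bounded away from zero and hence $u\to\pm\infty$). You instead run a cone argument on $V=uu'$, and the decisive extra ingredient is your observation that \eqref{H6} is exactly $(k\,e^{2\int b})'\ge 0$, which together with \eqref{H5} yields the uniform bound $\int_{t_0}^t b\le\frac12\ln(|k(t_0)|/\alpha)$; this is what neutralizes the $-bV$ term in $V'=(u')^2+|k|u^2-bV\ge(2\sqrt{\alpha}-b)V$ and closes the Gr\"onwall estimate. (Minor point in your favor: the inequality $(u')^2+|k|u^2\ge 2\sqrt{\alpha}\,uu'$ holds unconditionally, since $(u'-\sqrt{\alpha}u)^2\ge0$, so you do not even need to restrict to the region $V\ge0$ before integrating; positivity of $u$ and $u'$ then follows because $V$ never vanishes.) Your argument buys a sharper, quantitative conclusion --- $u(t)u'(t)\ge c\,\delta^2 e^{2\sqrt{\alpha}(t-t_0)}$, hence $\max\{u,u'\}$ grows at least exponentially --- whereas the paper's case analysis only yields divergence; on the other hand, the paper's proof is sign-agnostic about where the orbit sits (it only tracks $u'$) and avoids the auxiliary integrating-factor computation. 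Both proofs are valid under the stated hypotheses.
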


\begin{proof}
We construct a functional similar to the one in Chetaev's theorem
\cite{khalil}. 
However, the proof for the variable coefficient case is not the
same as the constant case. Let
\[
V(t)= u(t)^2+\frac{1}{k(t)} u'(t)^2,
\]
whose derivative along the solutions of \eqref{e1} is
\begin{equation} \label{V'}
V'(t)=2uu'-\frac{k'}{k^2}(u')^2+\frac{2}{k}u'u''
     =-\frac{k'+2kb}{k^2}(u')^2\,.
\end{equation}
Note that $V(t)$ can be positive or negative and that
 we can select $u(t_0)$ and $u'(t_0)$ so that $V(t_0)<0$.
Then by \eqref{H6}, $V'(t)\leq 0$ and $V(t)$ is non-increasing,
which allows only
two possible cases:\\
Case 1. When $\lim_{t\to\infty}V(t)=-\infty$, since
$V\geq \frac1{k}(u')^2\geq -\frac1{\alpha}(u')^2$,
we have $\lim_{t\to\infty}(u')^2=+\infty$.
Therefore,  $u'$ is unbounded and the zero
solution is unstable.
\\
Case 2. When $V(t)$ is bounded below, being non-increasing,
it converges to some negative number $-L^2$. Then there exists $t_1$
such that $V(t)\leq -L^2/4$ for $t\geq t_1$. By \eqref{H5},
\[
-\frac1{\alpha}(u')^2 \leq \frac1{k(t)}(u')^2 \leq V(t)\leq -L^2/4
\quad \text{for } t\geq t_1,
\]
which implies $|u'(t)|\geq L\sqrt{\alpha}/2>0$.
If $u'$ is positive, then it is bounded  below by a positive constant
for $t\geq t_1$. Therefore, $\lim_{t\to\infty}u(t)=+\infty$
and the zero solution is unstable.
If $u'$ is negative, then it is bounded above by a negative constant
for $t\geq t_1$. Therefore, $\lim_{t\to\infty}u(t)=-\infty$
and the zero solution is unstable.
This completes the proof.
\end{proof}

\begin{theorem} \label{unst3}
The zero solution of \eqref{e1} is unstable under the following
conditions: For all $t\geq t_0$,
\begin{gather}
 \text{there exists a positive constant $\alpha$ such that }
-k(t)\geq \alpha, \label{H7} \\
 k'(t)+2b(t)k(t)\leq 0\,,  \label{H8} \\
\begin{gathered}
\text{there exist positive constants $\alpha_3$ and $t_3$ such that} \\
-k'(t)-2b(t)k(t)\geq \frac 1t \alpha_3 b(t)^2k(t)^2, \quad \text{for }
t\geq t_3\,.
\end{gathered} \label{H9}
\end{gather}
\end{theorem}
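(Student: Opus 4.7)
Following the strategy of Theorem~\ref{shetaev2}, I would reuse the Chetaev-type functional $V(t)=u(t)^2+u'(t)^2/k(t)$, whose derivative along solutions of \eqref{e1} was already computed in \eqref{V'}. Hypothesis \eqref{H8} now tells us only that $V'\ge 0$, so the sign has flipped relative to Theorem~\ref{shetaev2}, and hypothesis \eqref{H9} sharpens this into the quantitative estimate
\[
V'(t)\;=\;-\frac{k'(t)+2b(t)k(t)}{k(t)^2}\,u'(t)^2\;\ge\;\frac{\alpha_3\,b(t)^2}{t}\,u'(t)^2\qquad(t\ge t_3).
\]

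The first step is to choose initial data with $V(t_0)>0$ while making $|u(t_0)|$ and $|u'(t_0)|$ as small as desired; for instance $u(t_0)=\eta$ and $u'(t_0)=0$, so $V(t_0)=\eta^2>0$ for arbitrarily small $\eta$. Since $V$ is non-decreasing and $V(t)=u(t)^2-u'(t)^2/|k(t)|\le u(t)^2$, I immediately obtain $u(t)^2\ge V(t_0)$ for all $t\ge t_0$. In particular $u$ must keep a constant sign—say $u(t)\ge\sqrt{V(t_0)}>0$—so $u$ is bounded away from zero.

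The second step splits into two cases, analogous to those in Theorem~\ref{shetaev2}. If $V(t)\to\infty$, then $u(t)^2\ge V(t)\to\infty$ already gives $|u|\to\infty$, which is instability. Otherwise $V$ remains bounded, so $V'\in L^{1}(t_0,\infty)$, and the strengthened estimate above forces
\[
\int_{t_3}^{\infty}\frac{b(s)^{2}}{s}\,u'(s)^{2}\,ds<\infty.
\]
In this bounded case I would combine that integrability with the integrated form $(e^{\int_{t_0}^{t}b}u')'=|k(t)|\,u(t)\,e^{\int_{t_0}^{t}b}$ of \eqref{e1} and the uniform positive lower bound on $u$ to show that $|u'|$ cannot be small on large enough sets for this integral to converge, and hence that $u$ must grow without bound, contradicting any purported $\epsilon$-bound.

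The hard part will be this bounded-$V$ case. A naive integrating-factor argument based only on $|k|\ge\alpha$ and $u\ge\sqrt{V(t_0)}$ is insufficient when $b$ grows so fast that $e^{-\int b}\int e^{\int b}$ is itself integrable; the role of \eqref{H9} is precisely to rule out that pathological regime by forcing $|k|$ to grow commensurately with $b$, and making this quantitative—so that the $1/t$ weight and the constant $\alpha_3$ earn their keep—is the delicate step of the proof.
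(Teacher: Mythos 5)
Your setup (the functional $V=u^2+(u')^2/k$, the computation of $V'$, the choice of data with $V(t_0)>0$, the observation that $u^2\geq V(t)\geq V(t_0)>0$ forces $u$ to keep a fixed sign, and the disposal of the case $V\to\infty$) matches the paper and is fine. But the bounded-$V$ case is exactly where the theorem lives, and there you stop at a sketch whose announced direction does not lead to the conclusion. From $V'\in L^1$ you extract $\int_{t_3}^\infty b(s)^2u'(s)^2\,ds/s<\infty$ and then propose to show that ``$|u'|$ cannot be small on large enough sets.'' That is backwards: the working mechanism is to show that $b u'$ \emph{is} eventually small. Concretely, since $V$ is non-decreasing and bounded it converges to some $L^2>0$ with $\int_{t_0}^\infty V'<\infty$; comparing with the divergent integral $\int dt/t$ and using \eqref{H9} together with \eqref{H7} (so that $1/k^2$ is controlled) one gets $b(t)^2u'(t)^2\to 0$, i.e.\ $|b u'|\leq L\alpha/4$ for large $t$. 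Meanwhile $u^2\geq V(t)\to L^2$ gives $|u|\geq 3L/4$ eventually, with fixed sign. Feeding both bounds into the equation itself, $u''=-bu'-ku\geq -L\alpha/4+\alpha\cdot 3L/4=L\alpha/2>0$ (for $u>0$; symmetrically for $u<0$), so $u$ has second derivative bounded below by a positive constant and must blow up. Nothing here uses the integrating-factor identity $(e^{\int b}u')'=|k|ue^{\int b}$; your plan to route the argument through that identity is precisely the approach you yourself observe can fail when $e^{-\int b}\int e^{\int b}$ is integrable, and you do not show how \eqref{H9} repairs it.

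So the gap is twofold: (i) the decisive step --- converting the integrability of $b^2(u')^2/t$ into the pointwise smallness of $bu'$, which is the only place the $1/t$ weight and the constant $\alpha_3$ in \eqref{H9} are used --- is missing; and (ii) the replacement strategy you outline (forcing $|u'|$ to be large) is not the one that closes the argument, since what is needed is that the damping term $-bu'$ becomes negligible next to the destabilizing term $-ku$, not that $u'$ stays large. As written, the proposal proves Case 1 only and explicitly defers the hard case, so it does not constitute a proof of the theorem.
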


\begin{proof}
We define the Chetaev's functional and compute its derivative as
in the proof Theorem \ref{shetaev2}.
Next we select $u(t_0)$ and $u'(t_0)$ so that $V(t_0)>0$.
Then by \eqref{H8}, $V'(t)\geq 0$ and $V(t)$ is non-decreasing,
which allows only two possible cases:

Case 1. When $\lim_{t\to\infty}V(t)=+\infty$,
since $V(t)\leq u(t)^2$, we have $\lim_{t\to\infty}u(t)=+\infty$
and the zero solution is unstable.

Case 2. When $V(t)$ is bounded above, being nondecreasing,
it converges to some positive number $L^2$. Therefore,
$\int_{t_0}^\infty V'(t)\,dt$ converges.
Since the integral $\int_{t_0}^\infty 1/t\,dt$ diverges, a limit comparison
yields $\lim_{t\to\infty} tV'(t)=0$.
 From \eqref{V'}, \eqref{H7}, and \eqref{H9}, we have
\[
0=\lim_{t\to\infty} tV'(t)
  \geq \lim_{t\to\infty} \alpha_3 b(t)^2 (u'(t))^2 \geq 0\,.
\]
Then  $\lim_{t\to\infty}|bu'|=0$; therefore, there exists a time $t_1$
such that $|b(t)u'(t)|\leq L\alpha/4$ for all $t\geq t_1$.

Since $V(t)\leq u(t)^2$ and $\lim_{t\to\infty}V(t)=L^2$, there exists
a time $t_2$ such that $|u(t)|\geq 3L/4$ for all $t\geq t_2$.
There are two possible cases for $t\geq\max\{t_1,t_2\}$:

 Case 1: $u(t)\geq 3L/4$. From \eqref{e1} and \eqref{H7},
\[
u''(t)=-bu'-ku
\geq -\frac{L\alpha}{4}-\frac{k3L}{4}
\geq -\frac{L\alpha}{4}+\frac{3\alpha L}{4}
= \frac{L\alpha}{2}>0.
\]
The solution $u$, being bounded below and having concavity greater
than a positive constant, must have $\lim_{t\to\infty}u(t)=+\infty$.
This implies instability of the zero solution.

 Case 2: $u(t)\leq -3L/4$. From \eqref{e1} and \eqref{H7},
\[
u''(t)=-bu'-ku
\leq \frac{L\alpha}{4}+\frac{k3L}{4}
\leq  \frac{L\alpha}{4} -\frac{3\alpha L}{4}
=-\frac{L\alpha}{2}<0.
\]
The solution $u$, being bounded above and having concavity less than a
negative constant, must have $\lim_{t\to\infty}u(t)=-\infty$.
This implies instability of the zero solution.

Since the above reasoning applies to arbitrarily small  positive initial
conditions such that $-k(t_0)u(t_0)^2\geq u'(t_0)^2$, the zero solution is
unstable and the proof is complete.
\end{proof}

\subsection*{Aknowledgements}
C.A.T-E. wants to thank, for the kind hospitality, the  MCTP at the 
University of Michigan, Ann Arbor,
where part of this work was done.


\begin{thebibliography}{00}

\bibitem{brogan} W. L. Brogan; \emph{Modern control theory}, third edition,
Prentice Hall, Englewood cliffs, New Jersey, 1991.

\bibitem{burton} T. A. Burton;
\emph{Stability by fixed point theory or Liapunov theory: a comparison},
Fixed Point Theory \textbf{4} (2003), no. 1, 15-32.

\bibitem{desoer}  C. A. Desoer; \emph{Slowly varying system $\dot{x} = A(t)x$},
IEEE Trans. Automat. Control, \textbf{14}, (1969), 780--781.

\bibitem{hale} J. K. Hale, A. P. Stokes;
\emph{Conditions for the stability of nonautonomous differential equations},
 J. Math. Anal. Appl.  \textbf{3}  (1961), 50--69.

\bibitem{ignatiev}  A. O. Ignatyev; \emph{Stability of a linear oscillator
with variable parameters}, Electronic Journal of Differential Equations,
\textbf{1997} (1997), No. 17, 1--6.

\bibitem{khalil}  H. K. Khalil; \emph{Nonlinear systems},
      New York, Macmillan, 1992.

\bibitem{napoles} Juan E. N\'apoles Vald\'es;
\emph{A note on the asymptotic stability in the whole of non-autonomous systems},
 Revista Colombiana de Matem\'aticas, \textbf{33}, (1999), 1--8.

\bibitem{rosen} H. H. Rosenbrock; \emph{The stability of linear time-dependent
control systems}, Journal of Electronics and Control, \textbf{15}, (1963), 73--80.

\bibitem{solo} V. Solo; \emph{On the stability of slowly time-varying linear
systems}, Mathematics of Control, Signals, and Systems, \textbf{7} (1994),
331-350.

\bibitem{starz} V. M. Starzhinsky; \emph{Sufficients conditions for stability of
mechanical systems with one degree of freedom}, J. of Appl. math. and Mech.,
\textbf{16} (1952), 369-374.

\bibitem{wang1} Y.-C. Wang, R. C. Lakes
\emph{Stability of negative stiffness viscoelastic systems},
 Quarterly of Applied Math., \textbf{63} (2005), no. 1,
 34--55

\bibitem{wang2} Y.-C. Wang, R. C. Lakes
\emph{Extreme stiffness systems due to negative stiffness elements},
 American J. Physics, \textbf{72} (2004), no. 1, 40--50

\end{thebibliography}
\end{document}